\numberwithin{equation}{section}
\newtheorem{theorem}{Theorem}[section]
\newtheorem{remark}{Remark}
\begin{document}
\title{Energy-conserving method for Stochastic Maxwell Equations with Multiplicative Noise}
       \author{
        Jialin Hong\footnotemark[2], Lihai Ji\footnotemark[3], Liying Zhang\footnotemark[1] and Jiaxiang Cai\footnotemark[4] \\
       {\small \footnotemark[2] Institute of Computational Mathematics and Scientific/Engineering Computing,}\\{\small Academy of Mathematics and Systems Science, Chinese Academy of Sciences, }\\
         {\small Beijing 100190, P.R.China }\\
          {\small \footnotemark[3] Institute of Applied Physics and Computational Mathematics,}\\
         {\small Beijing 100094, China}\\
         {\small \footnotemark[1] School of Mathematical Science, China University of Mining and Technology,}\\
         {\small Beijing 100083, China}\\
         {\small \footnotemark[4] School of Mathematical Science, Huaiyin Normal University,}\\
         {\small Huai'an, Jiangsu 210046, China}\\
          }
       \maketitle
       \footnotetext{\footnotemark[2] The first author is supported by the NNSFC (NO. 91130003, NO. 11021101 and NO. 11290142).\\
        \footnotemark[1]The third author is supported by the NNSFC (NO. 11301001, NO. 2013SQRL030ZD).}
        \footnotetext{\footnotemark[1]Corresponding author: lyzhang@lsec.cc.ac.cn}

       \begin{abstract}
          {\rm\small In this paper, it is shown that three-dimensional stochastic Maxwell equations with multiplicative noise are stochastic Hamiltonian partial differential equations possessing a geometric structure (i.e. stochastic mutli-symplectic conservation law), and the energy of system is a conservative quantity almost surely. We propose a stochastic multi-symplectic energy-conserving method for the equations by using the wavelet collocation method in space and stochastic symplectic method in time. Numerical experiments are performed to verify the excellent abilities
of the proposed method in providing accurate solution and preserving energy. The mean square convergence result of the method in temporal direction is tested
numerically, and numerical comparisons with finite difference method are also investigated.}\\

\textbf{AMS subject classification: } {\rm\small 60H35, 60H10, 65C30.}\\

\textbf{Key Words: }{\rm\small} energy-conserving method, three-dimensional stochastic Maxwell equations, multiplicative noise, geometric structure.
\end{abstract}
\section{Introduction}\label{1.1}
In this paper, we consider 3D stochastic Maxwell equations with multiplicative noise
\begin{equation}\label{stochastic maxwell equations}
\begin{split}
d\textbf{E}&=\nabla \times \textbf{H}dt-\lambda \textbf{H}\circ dW(t),\\[3mm]
d\textbf{H}&=-\nabla \times \textbf{E}dt+\lambda \textbf{E}\circ dW(t),
\end{split}
\end{equation}
where $t\in[0,T],~\textbf{x}=(x,y,z)\in\Theta\subset \mathbb{R}^{3}$, and $\Theta$ is a bounded and simply connected domain with smooth boundary $\partial\Theta.$ We employ the perfectly electric conducting (PEC) boundary condition
\begin{equation}\label{PEC}
\textbf{E}\times \textbf{n}=\textbf{0}
\end{equation}
on $(0, T] \times \partial\Theta,$ where
$\textbf{n}$ is the unit outward normal of $\partial\Theta$.
The above system is understood in the Stratonovich setting and the symbol $\circ$ stands for the Stratonovich product. Here, $\lambda\geq 0$ measures the size
of the noise and  $W$ is a $Q$-Wiener process defined on a given probability space $(\Omega,\mathcal{F},P,\{{\mathcal F}_{t}\}_{t\in[0,T]})$, with values in the Hilbert space ${\mathbb L}^2(\Theta)$, which is a space of square integrable real-valued functions. Let $\{e_m\}_{m\in \mathbb{N}}$ be an orthonormal basis of ${\mathbb L}^2(\Theta)$ consisting of eigenvectors of a symmetric, nonnegative and finite trace operator $Q$, i.e., $Tr(Q)<\infty$ and $Qe_m=\eta_me_m$. Then there exists a sequence of independent real-valued Brownian motions $\{\beta_m\}_{m\in \mathbb{N}}$ such that
$W(t,\textbf{x},\omega)=\sum\limits_{m=0}^{\infty}\sqrt{\eta_m}\beta_{m}(t,\omega)e_{m}(\textbf{x}),~~ t\geq 0, \textbf{x}\in\Theta,\omega\in \Omega.$

3D stochastic Maxwell equations with multiplicative noise play an important role in many scientific fields, especially in stochastic electromagnetism and statistical radiophysics \cite{Chauvi,Haier,RefKurt,Rytov}. We refer interested readers to \cite{RefLS} for the well-posedness of equations \eqref{stochastic maxwell equations}. By using infinite dimensional It\^{o} formula, it is straightforward to show that the $L^2(\Theta)$-norm of the solution is a constant almost surely (for more details see section 2), i.e.,
\begin{equation}
\begin{split}
\int_{\Theta}(|\mathbf{E}(\textbf{x},t,\omega)|^{2}+|\mathbf{H}(\textbf{x},t,\omega)|^{2})d\Theta=constant.
\end{split}
\end{equation}

There have been a lot of ongoing research activities in energy-conserving numerical methods for deterministic Maxwell equations, and various methods have been proposed in the literatures \cite{Chena,Chenb,Kong,Zhu11}. However, in the stochastic setting, we are only aware of the numerical schemes proposed in \cite{Cohen,Faou,Hong11,Misawa} for stochastic Hamiltonian ODEs. The authors in \cite{CZ,Refhai} proposed stochastic multi-symplectic methods for stochastic Maxwell equations with additive noise, which have the merits
of preserving the discrete stochastic multi-symplectic conservation law and stochastic energy dissipative
properties.

To the best of our knowledge, there has been no reference considering this aspect for stochastic Maxwell equations with multiplicative noise till now. In addition,
numerical methods preserving the structure characteristics of equations should be much better in preservation of physical properties and have better stability in numerical computation. Generally speaking, this kind of numerical methods constructed by finite difference techniques is completely implicit for non-separable stochastic system, and demands substantial computational cost.
In this paper, we use the idea of wavelet collocation method to get an efficient, multi-symplectic and energy-conserving numerical method.
By using this method, we obtain a
system of algebraic equations with a sparse
differentiation matrix, leading to a numerical algorithm
of reduced computational cost. Several numerical examples are presented to show the good behaviors of the proposed method by making comparison with a standard finite difference method. And the mean square convergence order of the proposed numerical method in temporal direction is shown.

The rest of this paper is organized as follows. In section 2, we present some geometric and physical properties of 3D stochastic Maxwell
equations with multiplicative noise. It is shown that the phase flow of equations preserves the stochastic multi-symplectic structure of phase space, and the equations possess energy conservation law.
In section 3, we propose a numerical method and show that the method preserves  discrete energy conservation law and the discrete stochastic multi-symplectic conservation law. In section 4, numerical experiments are performed to testify the effectiveness of the method. Concluding remarks are presented in section 5.
\section{Stochastic Maxwell equations}
In this section, we will present some preliminary results of 3D stochastic Maxwell equations \eqref{stochastic maxwell equations}. For simplicity in notations, we just consider the case that Wiener process $W(t,\textbf{x},\omega)$ is applied in one dimensional spatial direction.
\subsection{Stochastic multi-symplectic structure}
\label{SMSHMEs}
A stochastic partial differential equation is
called a stochastic Hamiltonian partial differential equation if it can be
written in the form \cite{RefY}
\begin{equation}\label{10}
Md_{t}u+Ku_{x}dt=\nabla S_{1}(u)dt+\nabla S_{2}(u)\circ
dW(t),~~u\in \mathbb{R}^{d},
\end{equation}
where $M$ and $K$ are skew-symmetric matrices, and $S_{1}$ and
$S_{2}$ are real smooth functions of the variable $u$.
Stochastic Maxwell equations with multiplicative noise (\ref{stochastic maxwell equations}) can be written as follows

\begin{equation}\label{12}
Md_{t}u+K_{1}u_{x}dt+K_{2}u_{y}dt+K_{3}u_{z}dt=\nabla_{u}S(u)\circ
dW,~~u\in \mathbb{R}^{6},
\end{equation}
where $
u=\big(H_{1},H_{2},H_{3},E_{1},E_{2},E_{3}\big)^{T}$, $
S(u)=\frac{\lambda}{2}(|\textbf{E}|^{2}+|\textbf{H}|^{2})$ and
\begin{equation*}
M=\left(\begin{array}{ccccccc}
0&-I_{3\times3}\\[1mm]
I_{3\times3}&0\\[1mm]
\end{array}\right),
K_{i}=\left(\begin{array}{cccccccc}
\mathcal{D}_{i}&0\\[1mm]
0&\mathcal{D}_{i}\\[1mm]
\end{array}
\right),~~i=1, 2, 3.
\end{equation*}
The sub-matrix $I_{3\times3}$ is a $3\times3$ identity matrix and
\begin{small}
$$
\mathcal{D}_{1}=\left(\begin{array}{ccccccc}
0&0&0\\[1mm]
0&0&-1\\[1mm]
0&~1&0\\[1mm]
\end{array}\right),
\mathcal{D}_{2}=\left(\begin{array}{ccccccc}
0&0&~1\\[1mm]
0&0&0\\[1mm]
-1&0&0\\[1mm]
\end{array}\right),
\mathcal{D}_{3}=\left(\begin{array}{ccccccc}
0&-1&0\\[1mm]
~1&0&0\\[1mm]
0&0&0\\[1mm]
\end{array}\right).
$$
\end{small}
Similarly to the proof of Theorem 2.2 in \cite{RefY}, we have the following result.
\begin{theorem}\label{them4}
System \eqref{12} possesses the stochastic multi-symplectic conservation law locally
\begin{equation*}\label{13}
d_{t}\omega+\partial_{x}\kappa_{1}dt+\partial_{y}\kappa_{2}dt+\partial_{z}\kappa_{3}dt=0, ~a.s.,
\end{equation*}
i.e.,
\begin{eqnarray}
\int_{z_{0}}^{z_{1}}\int_{y_{0}}^{y_{1}}\int_{x_{0}}^{x_{1}}\omega(t_{1},x,y,z)dxdydz+
\int_{z_{0}}^{z_{1}}\int_{y_{0}}^{y_{1}}\int_{t_{0}}^{t_{1}}\kappa_{1}(t,x_{1},y,z)dtdydz\nonumber\\
+\int_{z_{0}}^{z_{1}}\int_{x_{0}}^{x_{1}}\int_{t_{0}}^{t_{1}}\kappa_{2}(t,x,y_{1},z)dtdxdz+
\int_{y_{0}}^{y_{1}}\int_{x_{0}}^{x_{1}}\int_{t_{0}}^{t_{1}}\kappa_{3}(t,x,y,z_{1})dtdxdy\nonumber\\
=\int_{z_{0}}^{z_{1}}\int_{y_{0}}^{y_{1}}\int_{x_{0}}^{x_{1}}\omega(t_{0},x,y,z)dxdydz+
\int_{z_{0}}^{z_{1}}\int_{y_{0}}^{y_{1}}\int_{t_{0}}^{t_{1}}\kappa_{1}(t,x_{0},y,z)dtdydz\nonumber\\
+\int_{z_{0}}^{z_{1}}\int_{x_{0}}^{x_{1}}\int_{t_{0}}^{t_{1}}\kappa_{2}(t,x,y_{0},z)dtdxdz+
\int_{y_{0}}^{y_{1}}\int_{x_{0}}^{x_{1}}\int_{t_{0}}^{t_{1}}\kappa_{3}(t,x,y,z_{0})dtdxdy,\nonumber
\end{eqnarray}
where $\omega(t,x,y,z)=\frac{1}{2}du\wedge Mdu$,
$\kappa_{i}(t,x,y,z)=\frac{1}{2}du\wedge K_{i}du$ are
the differential 2-forms associated with the skew-symmetric matrices
$M$ and $K_{i}~(i=1,2,3)$, respectively, and
$(t_{0},t_{1})\times(x_{0},x_{1})\times(y_{0},y_{1})\times(z_{0},z_{1})$
is the local definition domain of $u(t,x,y,z)$.
\end{theorem}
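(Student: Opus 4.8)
The plan is to follow the variational calculus that underlies multi-symplectic conservation laws, adapted to the Stratonovich setting exactly as in the proof of Theorem 2.2 of \cite{RefY}. First I would form the variational (linearized) equation associated with \eqref{12} by applying the phase-space exterior differential $d$ to each term. Since $M$ and $K_i$ are constant matrices, this yields
\begin{equation*}
M d_{t}(du)+K_{1}\partial_{x}(du)\,dt+K_{2}\partial_{y}(du)\,dt+K_{3}\partial_{z}(du)\,dt=\nabla_{u}^{2}S(u)\,du\circ dW,
\end{equation*}
where $du$ denotes a variation of $u$ (a solution of this linearized system) and $\nabla_{u}^{2}S(u)$ is the Hessian of $S$.

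Second, I would take the wedge product of $du$ with every term of the variational equation. For the right-hand side the key observation is that $\nabla_{u}^{2}S(u)$ is a \emph{symmetric} matrix, because $S$ is a scalar function; hence $du\wedge\nabla_{u}^{2}S(u)\,du$ vanishes pointwise, and the entire stochastic term on the right disappears irrespective of the Stratonovich product. For the left-hand side I would use the elementary identity $a\wedge Nb=b\wedge Na$, valid for any skew-symmetric matrix $N$ and $1$-forms $a,b$ (it follows from $a_i\wedge b_j=-b_j\wedge a_i$ together with $N_{ji}=-N_{ij}$). Applied with $N=M$ and $N=K_i$, together with the Leibniz rule for $d_t$ and $\partial_{x_i}$, this gives $du\wedge M d_{t}(du)=\tfrac12 d_{t}(du\wedge M\,du)=d_{t}\omega$ and $du\wedge K_{i}\partial_{x_{i}}(du)=\tfrac12\partial_{x_{i}}(du\wedge K_{i}\,du)=\partial_{x_{i}}\kappa_{i}$. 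Collecting the terms produces exactly the local differential conservation law $d_{t}\omega+\partial_{x}\kappa_{1}\,dt+\partial_{y}\kappa_{2}\,dt+\partial_{z}\kappa_{3}\,dt=0$ almost surely.

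Third, I would integrate this differential identity over the space--time box $(t_{0},t_{1})\times(x_{0},x_{1})\times(y_{0},y_{1})\times(z_{0},z_{1})$, applying the fundamental theorem of calculus in each variable (its stochastic counterpart in the time variable), which converts the divergence form into the boundary-flux balance stated in the theorem.

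The main obstacle is the stochastic part: I must ensure that the ordinary Leibniz/product rule used to rewrite $du\wedge M d_{t}(du)$ as $\tfrac12 d_{t}(du\wedge M\,du)$ remains valid when $d_{t}$ carries the $\circ\,dW$ increment. This is precisely why the system is posed in Stratonovich form — Stratonovich calculus obeys the classical chain and product rules, so no It\^{o}-type quadratic-variation correction arises and the algebraic cancellations carry over verbatim from the deterministic multi-symplectic theory. Making rigorous the interchange of the phase-space differential $d$ with the stochastic time differential $d_{t}$ and with the spatial derivatives, and justifying the Stratonovich product rule at the level of differential forms, is the technical heart of the argument.
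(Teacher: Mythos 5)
Your proposal is correct and matches the approach the paper relies on: the paper gives no written proof of this theorem, instead deferring to Theorem 2.2 of \cite{RefY}, whose argument is precisely your variational-equation-plus-wedge-product derivation (the stochastic term vanishing by symmetry of $\nabla_{u}^{2}S$, the skew-symmetry of $M$ and $K_{i}$ turning the wedged terms into exact differentials, and the Stratonovich calculus justifying the classical Leibniz manipulations before integrating over the box). The same pattern is used verbatim in the paper's own proof of the discrete analogue (Theorem 3.1), which forms the discrete variational equation and takes the wedge product with $d\textbf{u}^{n+1/2}_{i,j,k}$, so your route is essentially identical to the paper's.
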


\subsection{\label{sec:energy conservation}Energy conservation law}
As is well known, the deterministic Maxwell equations have the following invariant \cite{Chena}
\begin{equation}\label{deter}
\begin{split}
\int_{\Theta}(|\textbf{E}(x,y,z,t)|^{2}+|\textbf{H}(x,y,z,t)|^{2})d\Theta=constant.
\end{split}
\end{equation}

This invariant is also called Poynting theorem in electromagnetism and can be easily verified. Similarly, based on the infinite dimensional It\^{o} formula  \cite{Prato}, we can also obtain the energy conservation law for system (\ref{stochastic maxwell equations}). This result shows that the electromagnetic energy is still invariant under the multiplicative noise. This is stated in the following Theorem.

\begin{theorem}\label{them1}
Let $(\mathbf{E}, \mathbf{H})^T$ be the solution of \eqref{stochastic maxwell equations} under PEC boundary condition \eqref{PEC}.
Then for any $t\in[0, T]$,
\begin{equation}\label{6}
\begin{split}
{\Upsilon}(t)&=\int_{\Theta}(|\mathbf{E}(x,y,z,t)|^{2}+|\mathbf{H}(x,y,z,t)|^{2})d\Theta\\
&=\int_{\Theta}(|\mathbf{E}(x,y,z,t_{0})|^{2}+|\mathbf{H}(x,y,z,t_{0})|^{2})d\Theta\\
&={\Upsilon}(t_{0}),~~a.s.
\end{split}
\end{equation}

\end{theorem}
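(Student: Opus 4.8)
The plan is to compute the stochastic differential $d\Upsilon(t)$ directly via the infinite-dimensional It\^o formula and to show that every contribution cancels. Since \eqref{stochastic maxwell equations} is posed in the Stratonovich sense, I would first rewrite it in its equivalent It\^o form. Writing $W(t,\mathbf{x})=\sum_m\sqrt{\eta_m}\beta_m(t)e_m(\mathbf{x})$ and observing that the diffusion coefficients $-\lambda\mathbf{H}$ and $\lambda\mathbf{E}$ are linear in the unknowns, the Stratonovich--It\^o correction produces an extra drift proportional to $-\tfrac{1}{2}\lambda^2 F_Q\,\mathbf{E}$ in the $\mathbf{E}$-equation and $-\tfrac{1}{2}\lambda^2 F_Q\,\mathbf{H}$ in the $\mathbf{H}$-equation, where I set $F_Q(\mathbf{x})=\sum_m\eta_m e_m^2(\mathbf{x})$; this series is finite because $\mathrm{Tr}(Q)<\infty$.

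Next I would apply the It\^o formula to the functional $\Upsilon(t)=\|\mathbf{E}(t)\|_{L^2}^2+\|\mathbf{H}(t)\|_{L^2}^2$ and collect $d\Upsilon$ into a drift part and a martingale part. The drift splits into three pieces: (i) the curl terms $2\langle\mathbf{E},\nabla\times\mathbf{H}\rangle-2\langle\mathbf{H},\nabla\times\mathbf{E}\rangle$; (ii) the drift coming from the Stratonovich correction, namely $-\lambda^2\int_\Theta F_Q(|\mathbf{E}|^2+|\mathbf{H}|^2)\,d\Theta$; and (iii) the quadratic-variation (trace) term from the It\^o formula, which equals $+\lambda^2\int_\Theta F_Q(|\mathbf{E}|^2+|\mathbf{H}|^2)\,d\Theta$. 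For piece (i) I would invoke the vector identity $\nabla\cdot(\mathbf{E}\times\mathbf{H})=\mathbf{H}\cdot(\nabla\times\mathbf{E})-\mathbf{E}\cdot(\nabla\times\mathbf{H})$ together with the divergence theorem, reducing (i) to $-2\int_{\partial\Theta}(\mathbf{E}\times\mathbf{H})\cdot\mathbf{n}\,dS$; the scalar triple product rewrites the integrand as $\mathbf{H}\cdot(\mathbf{n}\times\mathbf{E})$, which vanishes under the PEC condition \eqref{PEC}, $\mathbf{E}\times\mathbf{n}=\mathbf{0}$. Pieces (ii) and (iii) are equal and opposite, so they cancel, and the total drift is therefore zero.

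It then remains to treat the martingale part, which is $-2\lambda\sum_m\sqrt{\eta_m}\langle\mathbf{E},\mathbf{H}e_m\rangle\,d\beta_m+2\lambda\sum_m\sqrt{\eta_m}\langle\mathbf{H},\mathbf{E}e_m\rangle\,d\beta_m$. Since $\langle\mathbf{E},\mathbf{H}e_m\rangle=\int_\Theta(\mathbf{E}\cdot\mathbf{H})e_m\,d\Theta=\langle\mathbf{H},\mathbf{E}e_m\rangle$, the two sums coincide and cancel term by term, so the martingale part vanishes as well. Hence $d\Upsilon(t)=0$, which yields $\Upsilon(t)=\Upsilon(t_0)$ almost surely for every $t\in[0,T]$, as claimed in \eqref{6}.

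The main obstacle is the careful bookkeeping of the It\^o--Stratonovich conversion in infinite dimensions: one must verify that the correction drift generated by passing from the multiplicative Stratonovich noise to It\^o form is exactly the negative of the trace term produced by the It\^o formula. This precise balance is what makes the energy \emph{conserved} rather than merely dissipated (as happens in the additive-noise case treated in \cite{CZ,Refhai}), and it is the crux of the theorem. A secondary technical point is justifying the integration by parts and the application of the infinite-dimensional It\^o formula rigorously, since $\nabla\times$ is an unbounded operator; this requires the solution to possess sufficient spatial regularity (or to be handled in the appropriate variational setting), which I would invoke from the well-posedness theory cited as \cite{RefLS}.
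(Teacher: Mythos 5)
Your proposal is correct and follows essentially the same route as the paper's proof: convert the Stratonovich system to It\^o form with the correction drift $-\tfrac12\lambda^2\Psi\mathbf{E}$, $-\tfrac12\lambda^2\Psi\mathbf{H}$ (your $F_Q$ is the paper's $\Psi$), apply the infinite-dimensional It\^o formula to the squared $L^2$ norms, cancel the two martingale terms against each other, cancel the correction drift against the It\^o trace term, and kill the curl contribution via the divergence theorem and the PEC condition. If anything, your statement of the trace term as $\lambda^2\int_\Theta F_Q\,(|\mathbf{E}|^2+|\mathbf{H}|^2)\,d\Theta$ is written more carefully than the paper's term $P$, which loosely writes $\mathrm{Tr}(Q)$ where the pointwise quantity $\Psi(x)$ is meant; the cancellation you describe is exactly what the paper invokes with ``$D+P=0$''.
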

\begin{proof}
In order to prove the result \eqref{6}, we will use an equivalent It\^{o} form of \eqref{stochastic maxwell equations}. Define a function
\begin{align}\label{ito}
\Psi(x)=\sum_{m=0}^{\infty}(\sqrt{\eta_m}e_m(x))^2, ~ x\in \mathbb{R},
\end{align}
which is independent of the basis $(e_m)_{m\in\mathbb{N}}$. Then this equivalent It\^{o} equation can be rewritten as
 \begin{equation*}
\begin{split}
d\textbf{E}&=(\nabla\times\textbf{H}-\frac 1 2\lambda^2 \Psi \textbf{E})dt-\lambda\textbf{H}dW,\\
d\textbf{H}&=(-\nabla\times\textbf{E}-\frac 1 2\lambda^2 \Psi \textbf{H})dt+\lambda\textbf{E}dW.
\end{split}
\end{equation*}
Introducing the following functionals
\begin{align*}\nonumber
F_1(\textbf{E})=\int_{\Theta}|\textbf{E}(x,y,z,t)|^{2}d\Theta,\\\nonumber
F_2(\textbf{H})=\int_{\Theta}|\textbf{H}(x,y,z,t)|^{2}d\Theta.
\end{align*}
It is easy to verify that $\textbf{E}$ and $\textbf{H}$ satisfy the following first and second Fr\'{e}chet derivative
 \begin{equation}\label{3.3}
 \begin{split}
 DF_1(\textbf{E})(\varphi)=2\int_{\Theta}\langle\textbf{E},\varphi\rangle d\Theta,~D^2F_1(\textbf{E})(\varphi,\psi)&=2\int_{\Theta}\langle\psi,\varphi\rangle d\Theta,\\
 DF_2(\textbf{H})(\varphi)=2\int_{\Theta}\langle\textbf{H},\varphi\rangle d\Theta,~D^2F_2(\textbf{H})(\varphi,\psi)&=2\int_{\Theta}\langle\psi,\varphi\rangle d\Theta,
\end{split}
 \end{equation}
 where $\varphi,\psi \in L^{2}(\Theta)^3$, and $\langle\cdot, \cdot\rangle$ denotes the Euclid inner
product.

 By using the infinite dimensional It\^{o} formula for $F_1(\textbf{E}(t))$ and $F_2(\textbf{H}(t))$, we can obtain
 \begin{equation}\label{3.4}
 \begin{split}
 F_1(\textbf{E}(t))=&F_1(\textbf{E}(0))+\int_{0}^{t}\langle DF_1\textbf{E}(s),-\lambda \textbf{H}(s)dW(s)\rangle \\
 &+\int_{0}^{t}\langle DF_1(\textbf{E}(s)),\nabla\times \textbf{H}(s)-\frac 1 2\lambda^2\Psi\textbf{E}(s)\rangle ds\\
 &+\frac{\lambda^{2}}{2}\int_{0}^{t}Tr[D^2F_1(\textbf{E}(s))(\textbf{H}(s)Q^{\frac1 2})(\textbf{H}(s)Q^{\frac1 2})^*]ds.
 \end{split}
 \end{equation}
 Substituting (\ref{3.3}) into (\ref{3.4}), we have
 \begin{equation}\label{3.5}
 \begin{split}
 F_1(\textbf{E}(t))=&F_1(\textbf{E}(0))+\underbrace{2\int_{\Theta}\int_{0}^{t}\langle\textbf{E}(s),-\lambda \textbf{H}(s)dW(s)\rangle d\Theta}_{A}\\
 &+2\int_{\Theta}\int_{0}^{t}\{\langle\textbf{E}(s),\nabla\times \textbf{H}(s)-\frac 1 2\lambda^2\Psi\textbf{E}(s)\rangle\\
  &+\frac 1 2\lambda^{2}Tr[\langle\textbf{H}(s),\textbf{H}(s)\rangle Q^{\frac1 2}(Q^{\frac1 2})^*]\}dsd\Theta.
 \end{split}
 \end{equation}
Similarly, we apply It\^{o} formula to $F_2(\textbf{H}(t))$ and obtain
  \begin{equation}\label{3.6}
  \begin{split}
 F_2(\textbf{H}(t))=&F_2(\textbf{H}(0))+\underbrace{2\int_{\Theta}\int_{0}^{t}\langle\textbf{H}(s),\lambda \textbf{E}(s)dW(s)\rangle d\Theta}_{B}\\
 &-2\int_{\Theta}\int_{0}^{t}\langle\textbf{H}(s),\nabla\times \textbf{E}(s)+\frac 1 2\lambda^2\Psi\textbf{H}(s)\rangle dsd\Theta\\
 &+\lambda^{2}\int_{\Theta}\int_{0}^{t}Tr[\langle\textbf{E}(s),\textbf{E}(s)\rangle Q^{\frac1 2}(Q^{\frac1 2})^*]dsd\Theta.
 \end{split}
 \end{equation}
 Summing (\ref{3.5}) and (\ref{3.6}), we find that $A$ and $B$ extinguish, then we have
 \begin{equation*}
 \begin{split}
\int_{\Theta}(|\textbf{E}(t)|^{2}+|\textbf{H}(t)|^{2})d\Theta&=\int_{\Theta}(|\textbf{E}(0)|^{2}+|\textbf{H}(0)|^{2})d\Theta\\
&+2\underbrace{\int_{\Theta}\int_{0}^{t}\langle\textbf{E}(s),\nabla\times \textbf{H}(s)\rangle-\langle\textbf{H}(s),\nabla\times \textbf{E}(s)\rangle dsd\Theta}_{C}\\
&+\underbrace{\int_{\Theta}\int_{0}^{t}\langle\textbf{E}(s),-\lambda^2\Psi\textbf{E}(s)\rangle+ \langle\textbf{H}(s),-\lambda^2\Psi\textbf{H}(s)\rangle dsd\Theta}_{D}\\
&+\underbrace{\int_{\Theta}\int_{0}^{t}\lambda^2\langle\textbf{H}(s), \textbf{H}(s)\rangle Tr(Q)+\lambda^2\langle\textbf{E}(s), \textbf{E}(s)\rangle Tr(Q)dsd\Theta}_{P}.
\end{split}
\end{equation*}
Here $Tr(Q)$ denotes the trace of operate $Q$, i.e., $Tr(Q) =\sum\limits_{m\in \mathbb{N}}\langle Qe_m, e_m\rangle_{L_2}=\sum\limits_{m\in \mathbb{N}}\eta_m.$
By Green's formula and PEC boundary condition (\ref{PEC}), the term $C$ satisfies the following equality
\begin{equation*}\label{A}
\begin{split}
C&=-\int_{0}^{t}\int_{\Theta}\nabla\cdot(\textbf{E}\times\textbf{H})d\Theta ds\\
&=-\int_{0}^{t}\int_{\partial\Theta}(\textbf{E}\times\textbf{H})\cdot \textbf{n}d\Theta ds\\
&=0.
\end{split}
\end{equation*}
It follows from the definition of $\Psi$ that
$D+P=0.$
\end{proof}

\begin{remark}
 Equality (\ref{6}) is an important criterion in constructing efficient numerical methods for computing the propagation of electromagnetic wave and in measuring whether a numerical simulation method is good or not.
\end{remark}

\section{Energy-conserving method}
In this section, we propose an energy-conserving numerical method for \eqref{stochastic maxwell equations}.
It is a combination of midpoint method in time and wavelet collocation method in space.

\subsection{Discretization in time and space}
Applying the midpoint method to equation \eqref{stochastic maxwell equations} in temporal direction, we get
\begin{equation}\label{42}
\begin{split}
H_{1}^{n+1}=H_{1}^{n}+\Delta t\Big(\frac{\partial }{\partial
z}E_{2}^{n+1/2}-\frac{\partial}{\partial
y}E_{3}^{n+1/2}\Big)+\lambda E_{1}^{n+1/2}\Delta W^{n},\\
H_{2}^{n+1}=H_{2}^{n}+\Delta t\Big(\frac{\partial}{\partial x}E_{3}^{n+1/2}-\frac{\partial }{\partial z}E_{1}^{n+1/2}\Big)+\lambda E_{2}^{n+1/2}\Delta W^{n},\\
H_{3}^{n+1}=H_{3}^{n}+\Delta t\Big(\frac{\partial}{\partial
y}E_{1}^{n+1/2} -\frac{\partial}{\partial
x}E_{2}^{n+1/2}\Big)+\lambda E_{3}^{n+1/2}\Delta W^{n},\\
E_{1}^{n+1}=E_{1}^{n}-\Delta t\Big(\frac{\partial }{\partial z}H_{2}^{n+1/2}-\frac{\partial}{\partial
y}H_{3}^{n+1/2}\Big)-\lambda H_{1}^{n+1/2}\Delta W^{n},\\
E_{2}^{n+1}=E_{2}^{n}-\Delta t\Big(\frac{\partial }{\partial x}H_{3}^{n+1/2}-\frac{\partial}{\partial
z}H_{1}^{n+1/2}\Big)-\lambda H_{2}^{n+1/2}\Delta W^{n},\\
E_{3}^{n+1}=E_{3}^{n}-\Delta t\Big(\frac{\partial }{\partial y}H_{1}^{n+1/2}-\frac{\partial}{\partial
x}H_{2}^{n+1/2}\Big)-\lambda H_{3}^{n+1/2}\Delta W^{n},
\end{split}
\end{equation}
where $\Delta t$ is the temporal step-size and $u^{n+1/2}=\frac{1}{2}(u^{n}+u^{n+1})$. In the sequel,
\begin{align}\label{MMM}
\Delta W^{n}=W(t_{n+1})-W(t_{n})=\sum_{m=1}^{\mathcal{M}}\sqrt{\eta_m}(\beta_m(t_{n+1})-\beta_{m}(t_{n}))e_m,
\end{align}
where $\mathcal{M}$ is a positive integer.
Then, we apply wavelet collocation method to discretize (\ref{42}) in spatial direction and obtain the full-discrete stochastic multi-symplectic wavelet collocation method. Now we give some preliminary results of wavelet collocation method. For more details, see \cite{Bertoluzza} and references therein.

A Daubechies scaling function $\phi(x)$ of order $\gamma$ satisfies
\begin{align*}
\phi(x)=\sum_{k=0}^{\gamma-1}h_k\phi(2x-k),
\end{align*}
 where $\gamma$ is a positive even integer and $\{h_k\}_{k=0}^{\gamma-1}$ are $\gamma$ non-vanishing ``filter coefficients". Define the autocorrelation function $\theta(x)$ of $\phi(x)$ as
 \begin{align*}
 \theta(x)=\int\phi(x)\phi(t-x)dt.
 \end{align*}
 Suppose that $V_j$ is the linear span of $\{\theta_{jk}(x)=2^{j/2}\theta(2^jx-k),k\in \mathbb{Z}\}$. It can be proved that $(V_j)_{j\in \mathbb{Z}}$ forms
 a multiresolution analysis.

Consider $E_{1}(x,y,z,t)$ defined on spatial domain
$[0,L_{1}]\times[0,L_{2}]\times[0,L_{3}]$ with
$N_{1}\times N_{2}\times N_{3}$ grid points, where
$N_{1}=L_{1}\cdot 2^{J_{1}}$, $N_{2}=L_{2}\cdot 2^{J_{2}}$,
$N_{3}=L_{3}\cdot 2^{J_{3}}$.
Interpolating it at
collocation points $(x_{i},y_{j},z_{k})=(i/2^{J_{1}},j/2^{J_{2}},k/2^{J_{3}})$,
$i=1,...,N_{1}$, $j=1,...,N_{2}$, $k=1,...,N_{3}$ gives
\begin{equation}\label{52}
IE_{1}(x,y,z,t)=\sum_{i=1}^{N_{1}}\sum_{j=1}^{N_{2}}\sum_{k=1}^{N_{3}}E_{1_{i,j,k}}\theta(2^{J_{1}}x-i)\theta(2^{J_{2}}y-j)\theta(2^{J_{3}}z-k).
\end{equation}
Making partial differential with respect to $y$ and
evaluating the resulting expression at collocation points, we
obtain
\begin{equation}\label{53}
\begin{split}
\frac{\partial IE_{1}(x_{i},y_{j},z_{k},t)}{\partial
y}&=\sum_{i^{'}=1}^{N_{1}}\sum_{j^{'}=1}^{N_{2}}\sum_{k^{'}=1}^{N_{3}}E_{1_{i^{'},j^{'},k^{'}}}\theta(2^{J_{1}}x_{i}-i^{'})
\theta(2^{J_{3}}z_{k}-k^{'})\frac{d\theta(2^{J_{2}}y-j^{'})}{dy}|_{y_{j}}\nonumber\\
&=\sum_{j^{'}=1}^{N_{2}}E_{1_{i,j^{'},k}}(2^{J_{2}}\theta^{'}(j-j^{'}))=\sum_{j^{'}=j-(\gamma-1)}^{j+(\gamma-1)}E_{1_{i,j^{'},k}}(B^{y})_{j,j^{'}}\\
&=((I_{N_{1}}\otimes B^{y}\otimes
I_{N_{3}})\textbf{E}_{1})_{i,j,k},
\end{split}
\end{equation}
where $\otimes$ means Kronecker inner product and $I_{N_{1}}$ is the
$N_{1}\times N_{1}$ identity matrix. $\mathbf{E_{1}}=((E_{1})_{1,1,1},(E_{1})_{2,1,1},(E_{1})_{N_{1},1,1},\cdots,(E_{1})_{1,N_{2},1},\cdots,(E_{1})_{N_{1},N_{2},N_{3}})^{T}$. The differential matrix $B^{y}$ for the first-order partial differential operator $\partial_y$ is an $N_{2}\times
N_{2}$ sparse skew-symmetric circulant matrix with entries
\begin{equation*}
(B^y)_{m,m'}=\left\{\begin{array}{ccccccc}
2^{J_2}\theta'(m-m'),~~m-(\gamma-1)\leq m'\leq m+(\gamma-1);\\[3mm]
2^{J_2}\theta'(-l),~~~~~~~~~m-m'=N_{2}-l,~~~1\leq l\leq \gamma-1;\\[3mm]
2^{J_2}\theta'(l),~~~~~~~~~~~~m'-m=N_{2}-l,~~~1\leq l\leq \gamma-1;\\[3mm]
0,~~~~~~~~~~~~~~~~~~~~~~~~~~~~~~~~~~~~~~~\textrm{otherwise}.
\end{array}\right.
\end{equation*}
Using the similar manner, we can obtain the discrete differential matrices $B^{x}\otimes I_{N_{2}}\otimes
I_{N_{3}}$ and $I_{N_{1}}\otimes
I_{N_{2}}\otimes B^{z}$ corresponding to $\partial_x$ and $\partial_z$, respectively. Now we have a
full-discrete method for stochastic Maxwell equations as
\begin{equation}\label{54}
\begin{split}
(\mathbf{E_{1}})^{n+1}-(\mathbf{E_{1}})^{n}&=\Delta t\Big(A_{2}(\mathbf{H_{3}})^{n+1/2}-A_{3}(\mathbf{H_{2}})^{n+1/2}\Big)-\lambda(\mathbf{H_{1}})^{n+1/2} \mathbf{W}^{n},\\[3mm]
(\mathbf{E_{2}})^{n+1}-(\mathbf{E_{2}})^{n}&=\Delta t\Big(A_{3}(\mathbf{H_{1}})^{n+1/2}-A_{1}(\mathbf{H_{3}})^{n+1/2}\Big)-\lambda(\mathbf{H_{2}})^{n+1/2} \mathbf{W}^{n},\\[3mm]
(\mathbf{E_{3}})^{n+1}-(\mathbf{E_{3}})^{n}&=\Delta t\Big(A_{1}(\mathbf{H_{2}})^{n+1/2}-A_{2}(\mathbf{H_{1}})^{n+1/2}\Big)-\lambda(\mathbf{H_{3}})^{n+1/2} \mathbf{W}^{n},\\[3mm]
(\mathbf{H_{1}})^{n+1}-(\mathbf{H_{1}})^{n}&=\Delta t\Big(A_{3}(\mathbf{E_{2}})^{n+1/2}-A_{2}(\mathbf{E_{3}})^{n+1/2}\Big)+\lambda(\mathbf{E_{1}})^{n+1/2} \mathbf{W}^{n},\\[3mm]
(\mathbf{H_{2}})^{n+1}-(\mathbf{H_{2}})^{n}&=\Delta t\Big(A_{1}(\mathbf{E_{3}})^{n+1/2}-A_{3}(\mathbf{E_{1}})^{n+1/2}\Big)+\lambda(\mathbf{E_{2}})^{n+1/2} \mathbf{W}^{n},\\[3mm]
(\mathbf{H_{3}})^{n+1}-(\mathbf{H_{3}})^{n}&=\Delta t\Big(A_{2}(\mathbf{E_{1}})^{n+1/2}-A_{1}(\mathbf{E_{2}})^{n+1/2}\Big)+\lambda(\mathbf{E_{3}})^{n+1/2} \mathbf{W}^{n},
\end{split}
\end{equation}
where $A_{1}=B^{x}\otimes I_{N_{2}}\otimes I_{N_{3}}$, $A_{2}=I_{N_{1}}\otimes B^{y}\otimes I_{N_{3}}$ and
$A_{3}=I_{N_{1}}\otimes I_{N_{2}}\otimes B^{z}$ are skew-symmetric, $\mathbf{W}^{n}=\mathbf{e}\otimes (\Delta W_{1}^{n},\cdots,\Delta W_{N_{1}}^{n})^{T}$, $\mathbf{e}=(1,1,\cdots,1,1)_{N_{2}\times N_{3}}^{T}$, $\Delta W_{i}^{n}$ is an approximation of $\Delta W^n$ in spatial direction with respect to $x$. And $(\mathbf{H_{1}})^{n+1/2}\mathbf{W}^{n}$ denotes the components multiplication between $(\mathbf{H_{1}})^{n+1/2}$ and $\mathbf{W}^{n}$, respectively.

\subsection{Properties of the method}
In this section, we show that the method (\ref{54}) preserves the discrete stochastic multi-symplectic conservation law and discrete energy conservation law.
\subsubsection{Stochastic multi-symplecticity}
The discrete stochastic multi-symplectic conservation law is stated as follows.
\begin{theorem}
The method \eqref{54} has
the following discrete stochastic multi-symplectic conservation law
\begin{align}\label{55}
&\frac{\omega^{n+1}_{i,j,k}-\omega^{n}_{i,j,k}}{\Delta t}+\sum^{i+(\gamma-1)}_{i^{'}=i-(\gamma-1)}(B^{x})_{i,i^{'}}(\kappa_{x})^{n+1/2}_{i^{'},j,k}\\[5mm]\nonumber
&+\sum^{j+(\gamma-1)}_{j^{'}=j-(\gamma-1)}(B^{y})_{j,j^{'}}(\kappa_{y})^{n+1/2}_{i,j^{'},k}
+\sum^{k+(\gamma-1)}_{k^{'}=k-(\gamma-1)}(B^{z})_{k,k^{'}}(\kappa_{z})^{n+1/2}_{i,j,k^{'}}=0,
\end{align}
where
 \begin{align*}
\omega^{n}_{i,j,k}=\frac{1}{2}d\textbf{u}^{n}_{i,j,k}\wedge M
d\textbf{u}^{n}_{i,j,k},~~(\kappa_{x})^{n+1/2}_{i^{'},j,k}=
d\textbf{u}^{n+1/2}_{i,j,k}\wedge
K_{1}d\textbf{u}^{n+1/2}_{i^{'},j,k},\\[3mm](\kappa_{y})^{n+1/2}_{i,j^{'},k}=d\textbf{u}^{n+1/2}_{i,j,k}\wedge
K_{2} d\textbf{u}^{n+1/2}_{i,j^{'},k},~~(\kappa_{z})^{n+1/2}_{i,j,k^{'}}=d\textbf{u}^{n+1/2}_{i,j,k}\wedge
K_{3} d\textbf{u}^{n+1/2}_{i,j,k^{'}}.
\end{align*}
\end{theorem}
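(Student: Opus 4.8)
The plan is to carry out, at the discrete level, the same calculus-of-variations argument that underlies Theorem \ref{them4}. The first step is to observe that the six scalar relations in \eqref{54} are the componentwise form of the single compact scheme
\[
M\bigl(\mathbf{u}^{n+1}_{i,j,k}-\mathbf{u}^{n}_{i,j,k}\bigr)+\Delta t\sum_{\ell=1}^{3}\bigl(K_{\ell}A_{\ell}\mathbf{u}^{n+1/2}\bigr)_{i,j,k}=\nabla_{u}S(\mathbf{u}^{n+1/2}_{i,j,k})\,\Delta W^{n},
\]
with the skew-symmetric matrices $M,K_{1},K_{2},K_{3}$ of \eqref{12}, the differentiation operators $A_{1}=B^{x}\otimes I_{N_{2}}\otimes I_{N_{3}}$, $A_{2}=I_{N_{1}}\otimes B^{y}\otimes I_{N_{3}}$, $A_{3}=I_{N_{1}}\otimes I_{N_{2}}\otimes B^{z}$, so that, for instance, $(A_{1}\mathbf{u})_{i,j,k}=\sum_{i'}(B^{x})_{i,i'}\mathbf{u}_{i',j,k}$. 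Applying the exterior derivative $d$ in phase space turns this into the discrete variational equation; since $\nabla_{u}S(u)=\lambda u$, its right-hand side becomes $D^{2}S(\mathbf{u}^{n+1/2}_{i,j,k})\,d\mathbf{u}^{n+1/2}_{i,j,k}\,\Delta W^{n}$ with the symmetric Hessian $D^{2}S=\lambda I$.

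The core of the argument is to wedge this variational equation on the left with $d\mathbf{u}^{n+1/2}_{i,j,k}$ and analyse the three groups of terms separately. For the temporal term I would substitute $d\mathbf{u}^{n+1/2}_{i,j,k}=\tfrac12(d\mathbf{u}^{n}_{i,j,k}+d\mathbf{u}^{n+1}_{i,j,k})$ and expand the wedge. Using the elementary identity $a\wedge Mb=b\wedge Ma$, valid for a skew-symmetric $M$ and one-forms $a,b$, the two mixed products $d\mathbf{u}^{n}\wedge Md\mathbf{u}^{n+1}$ and $d\mathbf{u}^{n+1}\wedge Md\mathbf{u}^{n}$ cancel, while the remaining self-wedges collapse to $\omega^{n+1}_{i,j,k}-\omega^{n}_{i,j,k}$; dividing by $\Delta t$ produces the first term of \eqref{55}. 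For the three spatial terms only the bilinearity of the wedge is required: pulling the scalar entries $(B^{x})_{i,i'}$, $(B^{y})_{j,j'}$, $(B^{z})_{k,k'}$ outside the wedge reproduces $\sum_{i'}(B^{x})_{i,i'}(\kappa_{x})^{n+1/2}_{i',j,k}$ together with its $y$- and $z$-analogues, directly from the stated definitions of $\kappa_{x},\kappa_{y},\kappa_{z}$. The right-hand side then vanishes identically, because symmetry of $D^{2}S$ gives $d\mathbf{u}^{n+1/2}_{i,j,k}\wedge D^{2}S\,d\mathbf{u}^{n+1/2}_{i,j,k}=0$, as $a\wedge Qa=0$ for every symmetric matrix $Q$. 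Collecting the temporal and spatial contributions yields \eqref{55} exactly.

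I expect the delicate point to be the temporal bookkeeping, and in particular verifying that the midpoint rule is precisely what makes the argument close. The cancellation of the mixed wedge products, the collapse to $\omega^{n+1}-\omega^{n}$, and the annihilation of the source contribution all hinge on every field in the flux and noise terms being evaluated at the midpoint $\mathbf{u}^{n+1/2}$; had the scheme used $\mathbf{u}^{n}$ or $\mathbf{u}^{n+1}$ there instead, the variational right-hand side would fail to be a symmetric quadratic form in $d\mathbf{u}$ and the discrete conservation law would break. Checking, component by component, that the specific structure of \eqref{54} supplies exactly this midpoint evaluation and matches the matrices $M,K_{\ell}$ is therefore the main thing to get right.
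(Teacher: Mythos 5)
Your proposal is correct and follows essentially the same route as the paper: recast \eqref{54} as the compact scheme $M(\mathbf{u}^{n+1}_{i,j,k}-\mathbf{u}^{n}_{i,j,k})/\Delta t+\sum_{\ell}(\text{discrete }K_{\ell}\text{-flux})=\nabla_{u}S(\mathbf{u}^{n+1/2}_{i,j,k})\,\Delta W^{n}_{i}/\Delta t$, pass to the variational equation, and wedge with $d\mathbf{u}^{n+1/2}_{i,j,k}$, with the temporal telescoping, the bilinearity of the spatial terms, and the vanishing of the symmetric-Hessian source term exactly as you describe. In fact you supply the cancellation details (the identity $a\wedge Mb=b\wedge Ma$ for skew-symmetric $M$ and $a\wedge Qa=0$ for symmetric $Q$) that the paper leaves implicit in its one-line conclusion.
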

\begin{proof}
Let $\textbf{u}=(\mathbf{H_{1}},\mathbf{H_{2}},\mathbf{H_{3}},\mathbf{E_{1}},\mathbf{E_{2}},\mathbf{E_{3}})^{T}$. (\ref{54}) can be rewritten as
\begin{eqnarray*}
M\frac{\textbf{u}^{n+1}_{i,j,k}-\textbf{u}^{n}_{i,j,k}}{\Delta t}&&+\sum^{i+(\gamma-1)}_{i^{'}=i-(\gamma-1)}(B^{x})_{i,i^{'}}(K_{1}\textbf{u}^{n+1/2}_{i^{'},j,k})
+\sum^{j+(\gamma-1)}_{j^{'}=j-(\gamma-1)}(B^{y})_{j,j^{'}}(K_{2}\textbf{u}^{n+1/2}_{i,j^{'},k})\\[5mm]\nonumber
&&+\sum^{k+(\gamma-1)}_{k^{'}=k-(\gamma-1)}(B^{z})_{k,k^{'}}(K_{3}\textbf{u}^{n+1/2}_{i,j,k^{'}})=\nabla_{\textbf{u}}S(\textbf{u}^{n+1/2}_{i,j,k})\frac{\Delta W_{i}^{n}}{\Delta t}.
\end{eqnarray*}
The variational form associated with the above equation is
\begin{align}\label{57}
M\frac{d\textbf{u}^{n+1}_{i,j,k}-d\textbf{u}^{n}_{i,j,k}}{\Delta t}&+\sum^{i+(\gamma-1)}_{i^{'}=i-(\gamma-1)}(B^{x})_{i,i^{'}}(K_{1}d\textbf{u}^{n+1/2}_{i^{'},j,k})+
\sum^{j+(\gamma-1)}_{j^{'}=j-(\gamma-1)}(B^{y})_{j,j^{'}}(K_{2}d\textbf{u}^{n+1/2}_{i,j^{'},k})\\[5mm]\nonumber
&+\sum^{k+(\gamma-1)}_{k^{'}=k-(\gamma-1)}(B^{z})_{k,k^{'}}(K_{3}d\textbf{u}^{n+1/2}_{i,j,k^{'}})
=\nabla^{2}S(\textbf{u}^{n+1/2}_{i,j,k})d\textbf{u}^{n+1/2}_{i,j,k}\frac{\Delta W_{i}^{n}}{\Delta t}.\nonumber
\end{align}
Taking the wedge product with $d\textbf{u}^{n+1/2}_{i,j,k}$ on both
sides of (\ref{57}), we finish the proof.
\end{proof}
\subsubsection{Energy preservation}
In this subsection, we will state the discrete energy conservation law.
\begin{theorem}\label{Theorem}
Under periodic boundary conditions, the stochastic multi-symplectic
wavelet collocation method \eqref{54} has the following discrete energy
conservation law
\begin{align}\label{58}
\|\mathbf{E}^{n+1}\|^{2}+\|\mathbf{H}^{n+1}\|^{2}=\|\mathbf{E}^{n}\|^{2}+\|\mathbf{H}^{n}\|^{2},~a.s.,
\end{align}
where
\begin{eqnarray}
\|{\bf E}^{n}\|^{2}=\Delta x \Delta y\Delta z\sum^{N_{1}}_{i=1}\sum^{N_{2}}_{j=1}\sum^{N_{3}}_{k=1}\Big((E^{n}_{1_{i,j,k}})^{2}+(E^{n}_{2_{i,j,k}})^{2}+(E^{n}_{3_{i,j,k}})^{2}\Big),\nonumber\\
\|{\bf H}^{n}\|^{2}=\Delta x \Delta y\Delta z\sum^{N_{1}}_{i=1}\sum^{N_{2}}_{j=1}\sum^{N_{3}}_{k=1}\Big((H^{n}_{1_{i,j,k}})^{2}+(H^{n}_{2_{i,j,k}})^{2}+(H^{n}_{3_{i,j,k}})^{2}\Big).\nonumber
\end{eqnarray}
\end{theorem}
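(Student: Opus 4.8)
The plan is to exploit the defining property of the midpoint discretization, namely that it exactly preserves quadratic invariants. Concretely, I would take the discrete $L^2$ inner product (the one weighted by $\Delta x\,\Delta y\,\Delta z$ that defines $\|\cdot\|$) of each of the six equations in \eqref{54} with the corresponding midpoint vector: pair the three $\mathbf{E}$-equations against $(\mathbf{E}_1)^{n+1/2},(\mathbf{E}_2)^{n+1/2},(\mathbf{E}_3)^{n+1/2}$ and the three $\mathbf{H}$-equations against $(\mathbf{H}_1)^{n+1/2},(\mathbf{H}_2)^{n+1/2},(\mathbf{H}_3)^{n+1/2}$, then add all six resulting identities. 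On the left-hand side I would apply the elementary identity $\langle u^{n+1}-u^n,\tfrac12(u^{n+1}+u^n)\rangle=\tfrac12(\|u^{n+1}\|^2-\|u^n\|^2)$ componentwise; summing over the three electric and three magnetic components collapses the left-hand side exactly to $\tfrac12\big(\|\mathbf{E}^{n+1}\|^2+\|\mathbf{H}^{n+1}\|^2-\|\mathbf{E}^n\|^2-\|\mathbf{H}^n\|^2\big)$, so \eqref{58} reduces to showing that the accumulated right-hand side vanishes.

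The right-hand side splits into a curl part and a noise part, and I would kill each separately. For the curl part the key structural fact is that under periodic boundary conditions each of $A_1=B^x\otimes I\otimes I$, $A_2=I\otimes B^y\otimes I$, $A_3=I\otimes I\otimes B^z$ is skew-symmetric, being a Kronecker product of the skew-symmetric circulant matrices $B^x,B^y,B^z$ with identity blocks, so that $\langle u,A_i v\rangle=-\langle A_i u,v\rangle$. Every advection term in an $\mathbf{E}$-equation then has an exact partner in an $\mathbf{H}$-equation: for instance the contribution $\langle(\mathbf{E}_1)^{n+1/2},A_2(\mathbf{H}_3)^{n+1/2}\rangle$ coming from the first equation is annihilated by $\langle(\mathbf{H}_3)^{n+1/2},A_2(\mathbf{E}_1)^{n+1/2}\rangle$ coming from the sixth, since skew-symmetry of $A_2$ makes the latter equal to $-\langle(\mathbf{E}_1)^{n+1/2},A_2(\mathbf{H}_3)^{n+1/2}\rangle$. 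Matching all six such pairs dictated by the $\pm$ pattern of \eqref{54} makes the entire curl contribution vanish.

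For the noise part the cancellation is even more direct. The electric equations contribute $-\lambda\sum_{\ell=1}^3\langle(\mathbf{E}_\ell)^{n+1/2},(\mathbf{H}_\ell)^{n+1/2}\mathbf{W}^n\rangle$ while the magnetic equations contribute $+\lambda\sum_{\ell=1}^3\langle(\mathbf{H}_\ell)^{n+1/2},(\mathbf{E}_\ell)^{n+1/2}\mathbf{W}^n\rangle$. Because $\mathbf{W}^n$ enters as a componentwise multiplier, each of these inner products is the same triple grid sum $\sum_{i,j,k}(E_\ell)^{n+1/2}_{i,j,k}(H_\ell)^{n+1/2}_{i,j,k}(W^n)_{i,j,k}$, so the two families of terms are literally identical and their opposite $\lambda$-signs force cancellation. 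With both the curl and noise parts gone, the right-hand side is zero, which yields \eqref{58} pathwise for each $\omega$, hence almost surely.

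I expect the only genuinely delicate point to be the bookkeeping of the curl cancellations: one must check that the skew-symmetric pairing matches the six advection terms with precisely the opposite signs demanded by \eqref{54}, and, crucially, that skew-symmetry of the $A_i$ relies on the periodic boundary conditions assumed in the statement rather than the PEC condition, since periodicity is exactly what guarantees $B^x,B^y,B^z$ are circulant and thus skew-symmetric with no boundary remainder term. The midpoint identity on the left and the noise cancellation are routine by comparison.
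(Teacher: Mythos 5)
Your proposal is correct and follows essentially the same route as the paper's own proof: taking the (weighted) inner product of each equation in \eqref{54} with the corresponding midpoint value, using the identity $\langle u^{n+1}-u^{n},u^{n+1/2}\rangle=\tfrac12\bigl(\|u^{n+1}\|^{2}-\|u^{n}\|^{2}\bigr)$, cancelling the curl terms pairwise via skew-symmetry of $A_{1},A_{2},A_{3}$ under periodic boundary conditions, and observing that the componentwise noise terms cancel by sign. Your write-up is in fact more explicit than the paper's (which compresses the cancellations into one sentence), and your remark that skew-symmetry hinges on periodicity, not the PEC condition, is exactly the right delicate point.
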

\begin{proof}
We make inner product
of (\ref{54}) with $\mathbf{E}_{1}^{n+1/2}$, $\mathbf{E}_{2}^{n+1/2}$,
$\mathbf{E}_{3}^{n+1/2}$, $\mathbf{H}_{1}^{n+1/2}$, $\mathbf{H}_{2}^{n+1/2}$, $\mathbf{H}_{3}^{n+1/2}$,
respectively, it yields
\begin{equation*}
\begin{split}
(\mathbf{E}_{1}^{n+1})^{2}
-(\mathbf{E}_{1}^{n})^{2}=2(A_{2}\mathbf{H}_{3}^{n+1/2}-A_{3}\mathbf{H}_{2}^{n+1/2})\mathbf{E}_{1}^{n+1/2}\Delta t-2\lambda \mathbf{H}_{1}^{n+1/2}\mathbf{E}_{1}^{n+1/2}\mathbf{W}^{n},\\[3mm]
(\mathbf{E}_{2}^{n+1})^{2}-(\mathbf{E}_{2}^{n})^{2}
=2(A_{3}\mathbf{H}_{1}^{n+1/2}-A_{1}\mathbf{H}_{3}^{n+1/2})\mathbf{E}_{2}^{n+1/2}\Delta t-2\lambda \mathbf{H}_{2}^{n+1/2}\mathbf{E}_{2}^{n+1/2} \mathbf{W}^{n},\\[3mm]
(\mathbf{E}_{3}^{n+1})^{2}-(\mathbf{E}_{3}^{n})^{2}
=2(A_{1}\mathbf{H}_{2}^{n+1/2}-A_{2}\mathbf{H}_{1}^{n+1/2})\mathbf{E}_{3}^{n+1/2}\Delta t-2\lambda \mathbf{H}_{3}^{n+1/2}\mathbf{E}_{3}^{n+1/2}\mathbf{W}^{n},\\[3mm]
(\mathbf{H}_{1}^{n+1})^{2}-(\mathbf{H}_{1}^{n})^{2}
=2(A_{3}\mathbf{E}_{2}^{n+1/2}-A_{2}\mathbf{E}_{3}^{n+1/2})\mathbf{H}_{1}^{n+1/2}\Delta t+2\lambda \mathbf{E}_{1}^{n+1/2}\mathbf{H}_{1}^{n+1/2}\mathbf{W}^{n},\\[3mm]
(\mathbf{H}_{2}^{n+1})^{2}-(\mathbf{H}_{2}^{n})^{2}
=2(A_{1}\mathbf{E}_{3}^{n+1/2}-A_{3}\mathbf{E}_{1}^{n+1/2})\mathbf{H}_{2}^{n+1/2}\Delta t+2\lambda \mathbf{E}_{2}^{n+1/2}\mathbf{H}_{2}^{n+1/2}\mathbf{W}^{n},\\[3mm]
(\mathbf{H}_{3}^{n+1})^{2}-(\mathbf{H}_{3}^{n})^{2}
=2(A_{2}\mathbf{E}_{1}^{n+1/2}-A_{1}\mathbf{E}_{2}^{n+1/2})\mathbf{H}_{3}^{n+1/2}\Delta t+2\lambda
\mathbf{E}_{3}^{n+1/2}\mathbf{H}_{3}^{n+1/2}\mathbf{W}^{n}.
\end{split}
\end{equation*}
Summing over subscripts $i,j,k$, exploiting the skew-symmetric property of $A_{i}$ and using the periodic boundary conditions reads \eqref{58}.
\end{proof}

The result of this theorem is evidently consistent with (\ref{6}), which means that the energy can be preserved by the proposed stochastic multi-symplectic wavelet collocation method. In the next section, we will verify this conservation law numerically.

\section{Numerical experiments}
This section will provide numerical experiments to test the new derived stochastic multi-symplectic wavelet collocation method (\ref{54}). We focus on the preservation of energy, the mean square convergence order in time and a comparison with a finite difference method.

When we take no account of the noise term, i.e., $\lambda=0$, \eqref{stochastic maxwell equations} reduces to 3D deterministic Maxwell equations.
In our numerical calculations, initial values
\begin{equation}\label{111}
\begin{split}
E_{1_{0}}&=\cos(2\pi(x+y+z)),~E_{2_{0}}=-2E_{1_0},~E_{3_0}=E_{1_0},\\
H_{1_0}&=\sqrt{3}E_{1_0},~H_{2_0}=0,~H_{3_0}=-\sqrt{3}E_{1_0}.
\end{split}
\end{equation}
 on $\Theta=[0,1]^3$ and periodic boundary conditions are considered.
In the following experiments, we take $\Delta t=0.005$ and
 $\Delta x=\Delta y=\Delta z=1/2^{5}$. We use the order of the Daubechies scaling function $\gamma=10$ to solve the problem till time $T=20$ and $T=200$, respectively.
In the sequel, we take the orthonormal basis $(e_m)_{m\in\mathbb{N}}$ and eigenvalue $(\eta_{m})_{m\in\mathbb{N}}$ as
\begin{equation}\label{123}
e_{m}(x)=\sqrt{2}\sin(m\pi x),~~\eta_{m}=\frac{1}{m^{2}},
\end{equation}
then $\Delta W_{i}^{n}$ can be regarded as an approximation of integral
\begin{equation}\label{1234}
\Delta W_{i}^{n}=\frac{1}{\Delta x}\int_{i\Delta x}^{(i+1)\Delta x}\int_{t_{n}}^{t_{n+1}}\sum_{m=1}^{\mathcal{M}}\sqrt{\eta_{m}}e_{m}(x)d\beta_{m}(s)dx.
\end{equation}
Substituting (\ref{123}) into (\ref{1234}) yields
\begin{align}\label{Noise}
\Delta W_i^n=\frac{1}{\Delta x}\sum_{m=1}^{\mathcal{M}}\frac{\sqrt{2\eta_m}}{m\pi}\Big[\cos(m\pi i \Delta x)-\cos(m\pi(i+1) \Delta x)\Big]\Big[\beta_m(t_{n+1})-\beta_m(t_{n})\Big],
\end{align}
where $(\beta_m(t_{n+1})-\beta_m(t_{n}))/\sqrt{\Delta t}$ is a sequence of independent and $\mathcal{N}(0,1)$ distributed random variables.
In the sequel, we choose $\mathcal{M}=200$ in \eqref{Noise}.
\begin{figure}[th!]
\begin{center}
  \includegraphics[width=0.8\textwidth]{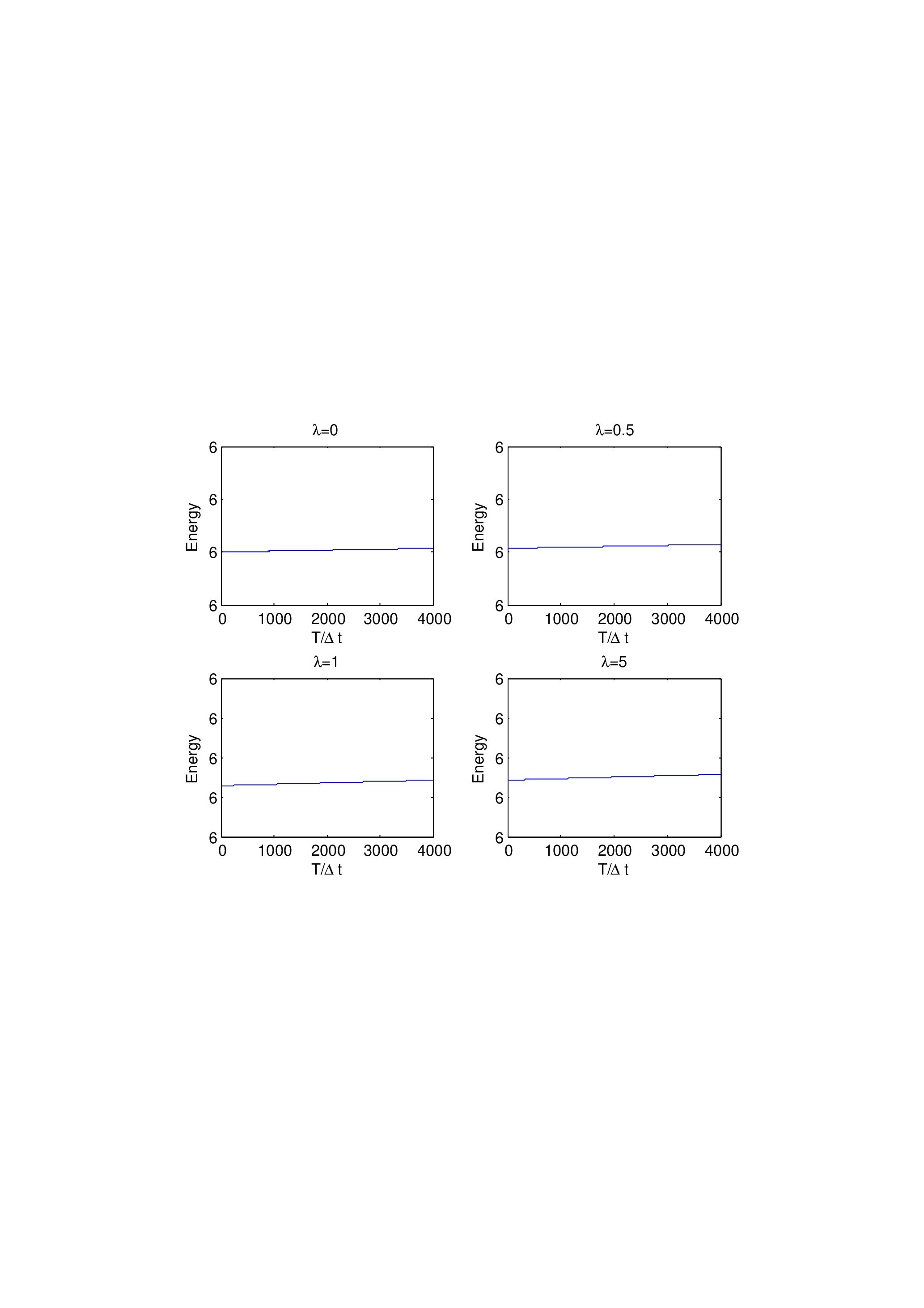}
  \caption{Evolution of the energy over one trajectory until $T=20$ with $\Delta t=0.005$ as $\lambda=0$, $\lambda=0.5$, $\lambda=1$ and $\lambda=5$, respectively.}\label{5.8}
  \end{center}
\end{figure}

\begin{figure}[th!]
\begin{center}
  \includegraphics[width=0.8\textwidth]{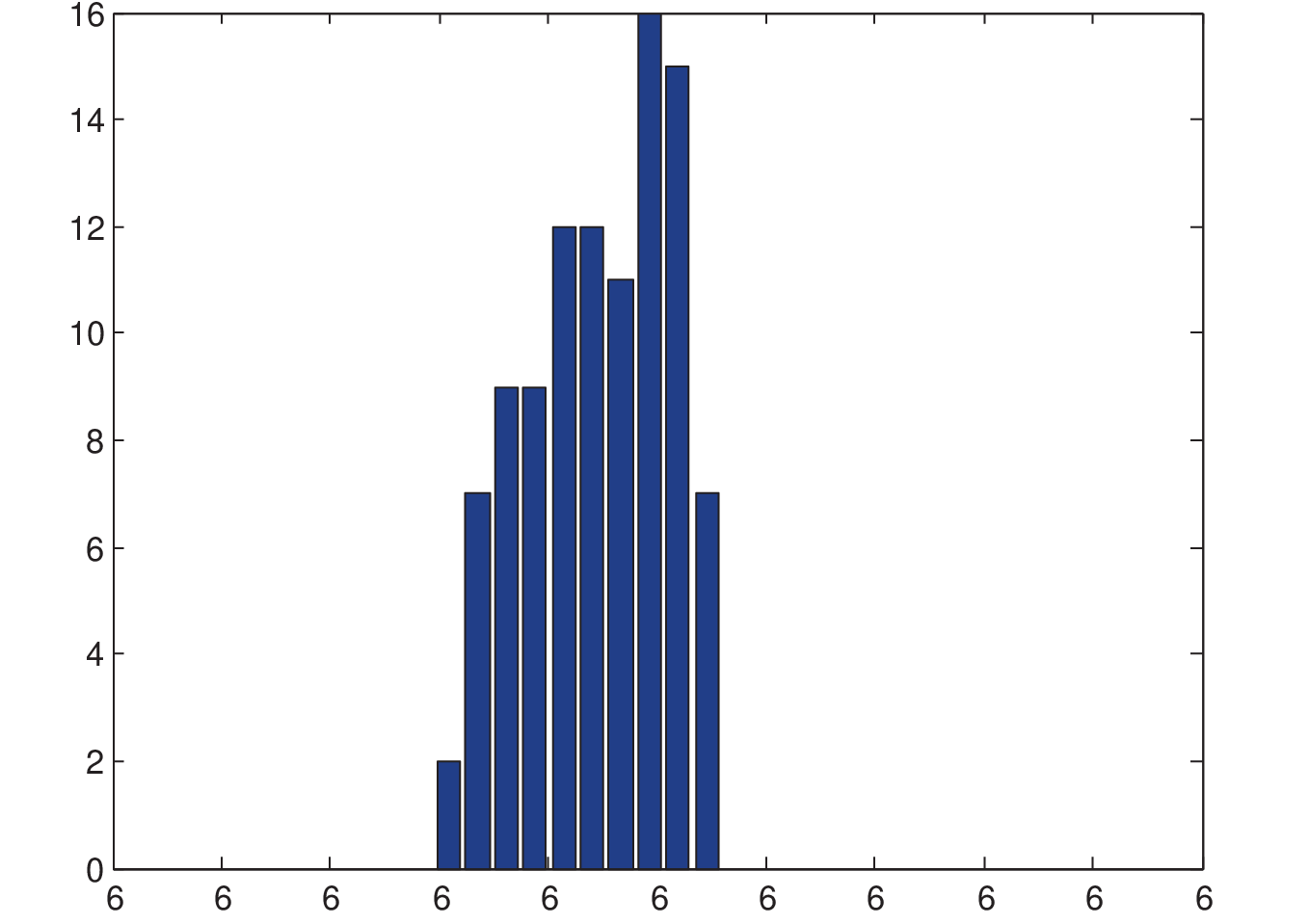}
  \caption{The probability density function of $\max\limits_n\Big(\|\mathbf{E}^n\|^2+\|\mathbf{H}^n\|^2\Big)$ in the sense of $L^2$.}\label{5.2}
  \end{center}
\end{figure}

Fig. \ref{5.8} shows the evolution of the discrete energy of one trajectory with different size of noise $\lambda=0$, $\lambda=0.5$, $\lambda=1$ and $\lambda=5$. From the figure, the energy remains to be straight horizontal lines approximately for different sizes of noise, which coincides with the theoretical analysis.
Meanwhile, from Fig. \ref{5.2} displayed the probability density function of random variable $\max\limits_n\Big(\|\mathbf{E}^n\|^2+\|\mathbf{H}^n\|^2\Big)$, we may observe that the averaged energy is bounded.
Fig. \ref{5.9} illustrates the global residuals of the discrete energy of one trajectory, i.e., $(Err)^{n}:=\Upsilon^{n}-\Upsilon^{0}$. Obviously, the residuals reach the magnitude of $10^{-11}$ for the above values of $\lambda$. All these results indicate that the proposed method preserves the discrete energy conservation law no matter how big the size of noise is. The stability in long-time computations related to the discrete energy conservation law is illustrated in Fig. \ref{5.11}. The discrete energy is almost invariant up to $T=200$ with the above sizes of noise.
The error
 in the discrete energy can be still controlled in the scale of $10^{-10}$. It can be seen that the magnitude of the global residuals become bigger than the one at $T=20$.
 \begin{figure}[th!]
\begin{center}
  \includegraphics[width=0.8\textwidth]{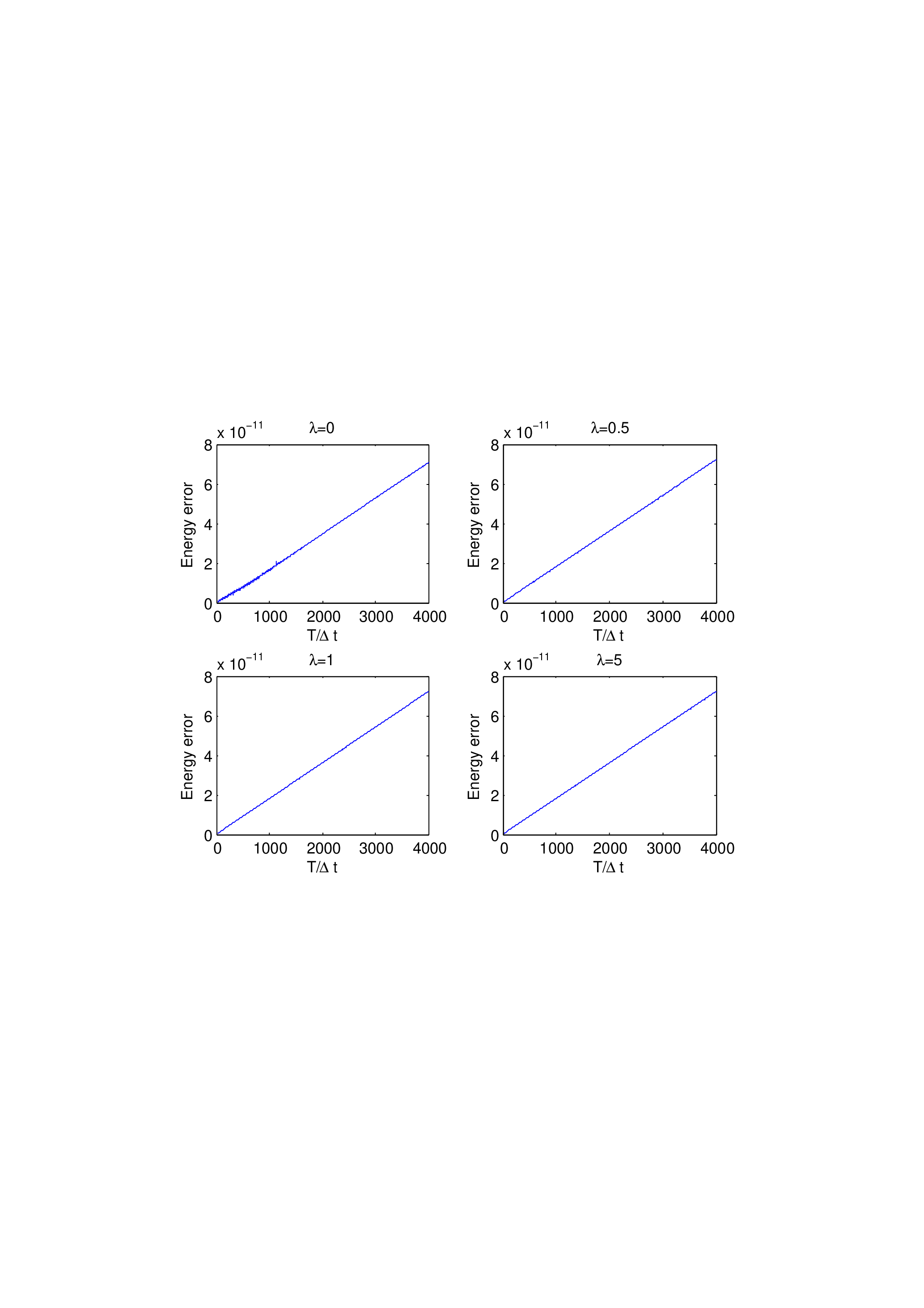}
  \caption{The global errors of discrete energy over one trajectory until $T=20$ with $\Delta t=0.005$ as $\lambda=0$, $\lambda=0.5$, $\lambda=1$ and $\lambda=5$, respectively.}\label{5.9}
  \end{center}
\end{figure}
\begin{figure}[th!]
\begin{center}
  \includegraphics[width=0.8\textwidth]{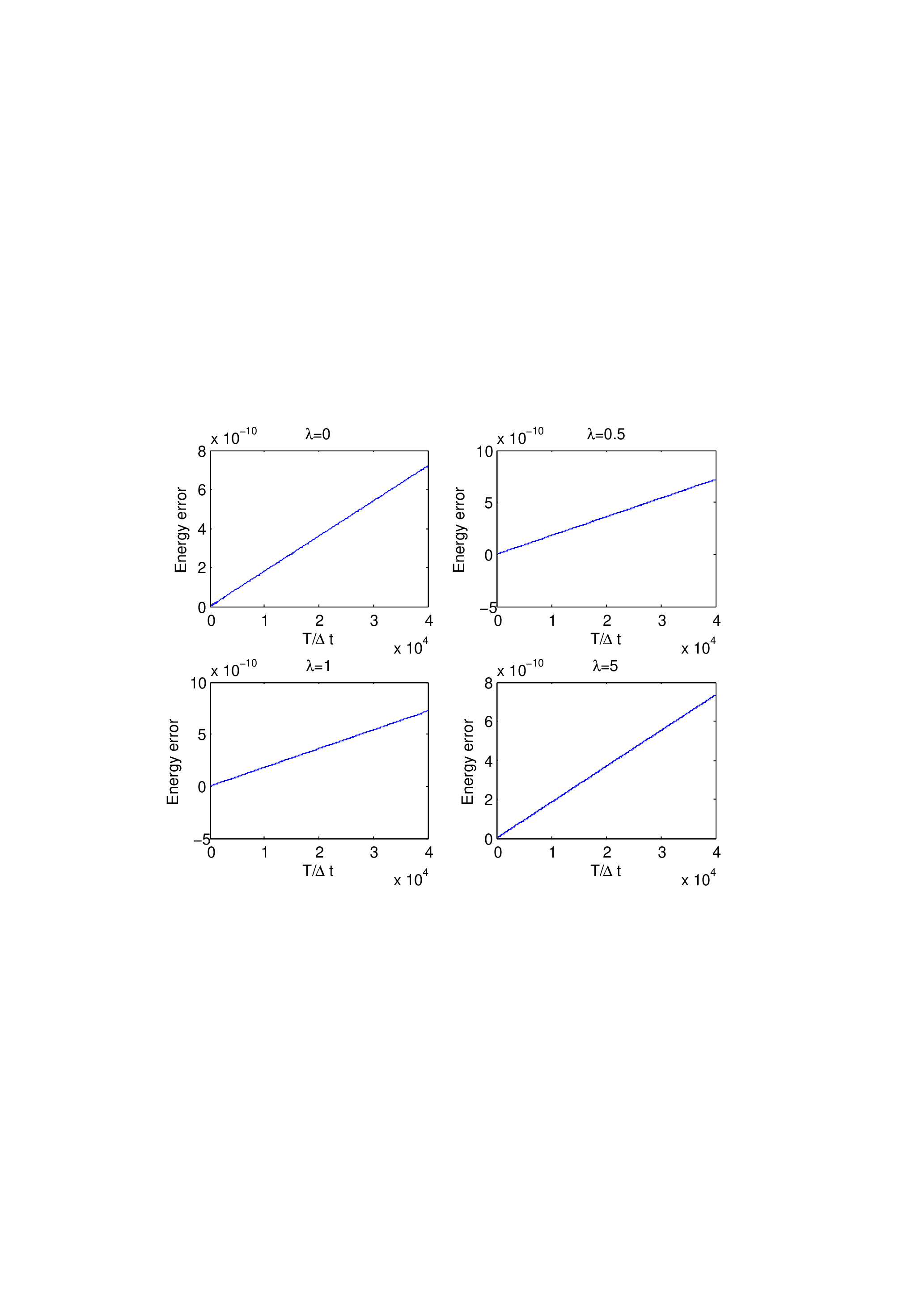}
  \caption{The global errors of discrete energy over one trajectory until $T=200$ with $\Delta t=0.005$ as $\lambda=0$, $\lambda=0.5$, $\lambda=1$ and $\lambda=5$, respectively.}\label{5.11}
  \end{center}
\end{figure}

\begin{figure}[th!]
\begin{center}
  \includegraphics[width=0.8\textwidth]{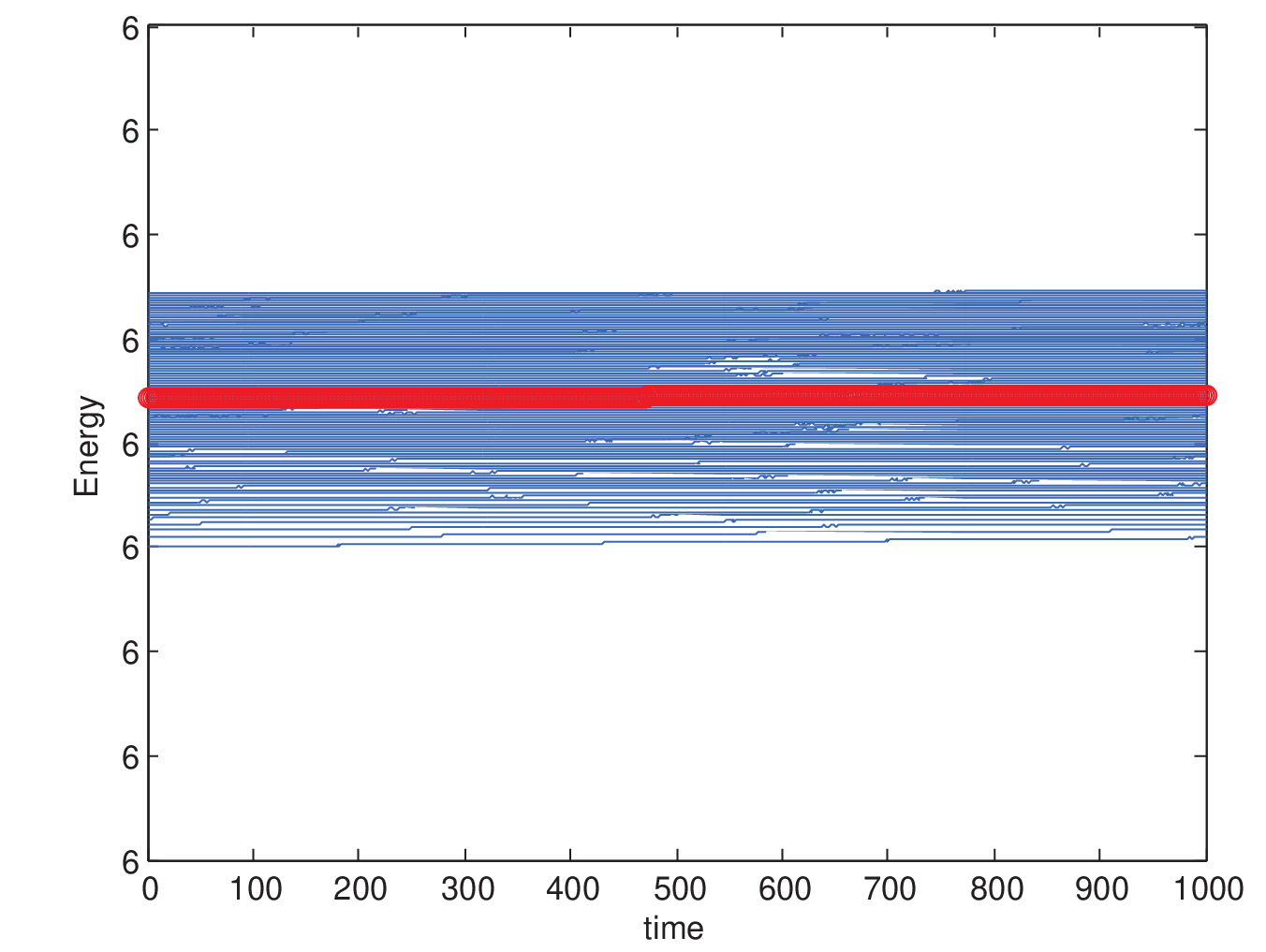}
  \caption{The averaged energy over 100 trajectories for $\lambda=0.5$.}\label{5.10}
  \end{center}
\end{figure}

In Fig. \ref{5.10}, the blue lines denote energy along 100
trajectories and the red line represents the averaged energy with $T=5$ and $\lambda=0.5$, respectively.
Obviously, the averaged energy is  nearly horizontal line with respect to time, which coincides with the continuous case.


%

In order to show the superiority of the wavelet collocation method in the preservation of energy,
we discretize the stochastic Maxwell equations \eqref{stochastic maxwell equations} by the central finite difference method.
Define the following normalized energy
\begin{equation*}
  \text{Normalized~~energy}:=(\Upsilon^{n}-\Upsilon^{0})\times 10^7,
\end{equation*}
where $\Upsilon^{n}$ and $\Upsilon^{0}$ denote the discrete energy at $t_{n}$ and $t_{0}$, respectively.
Fig. \ref{energy_com} exhibits the discrete averaged normalized energy over 100 trajectories with $\lambda=\sqrt{2}$ and $T=10$, respectively. We observe that the wavelet collocation method preserves the energy very well. However, the discrete averaged normalized energy obtained by finite difference method shows a rapid growth as time evolves. Furthermore, we plot the energy errors of the two methods in Fig. \ref{energy_error_com}. From the figure, it can be seen that the proposed method is more accurate than the finite difference method in the preservation of energy.
\begin{figure}[th!]
\begin{center}
  \includegraphics[width=0.8\textwidth]{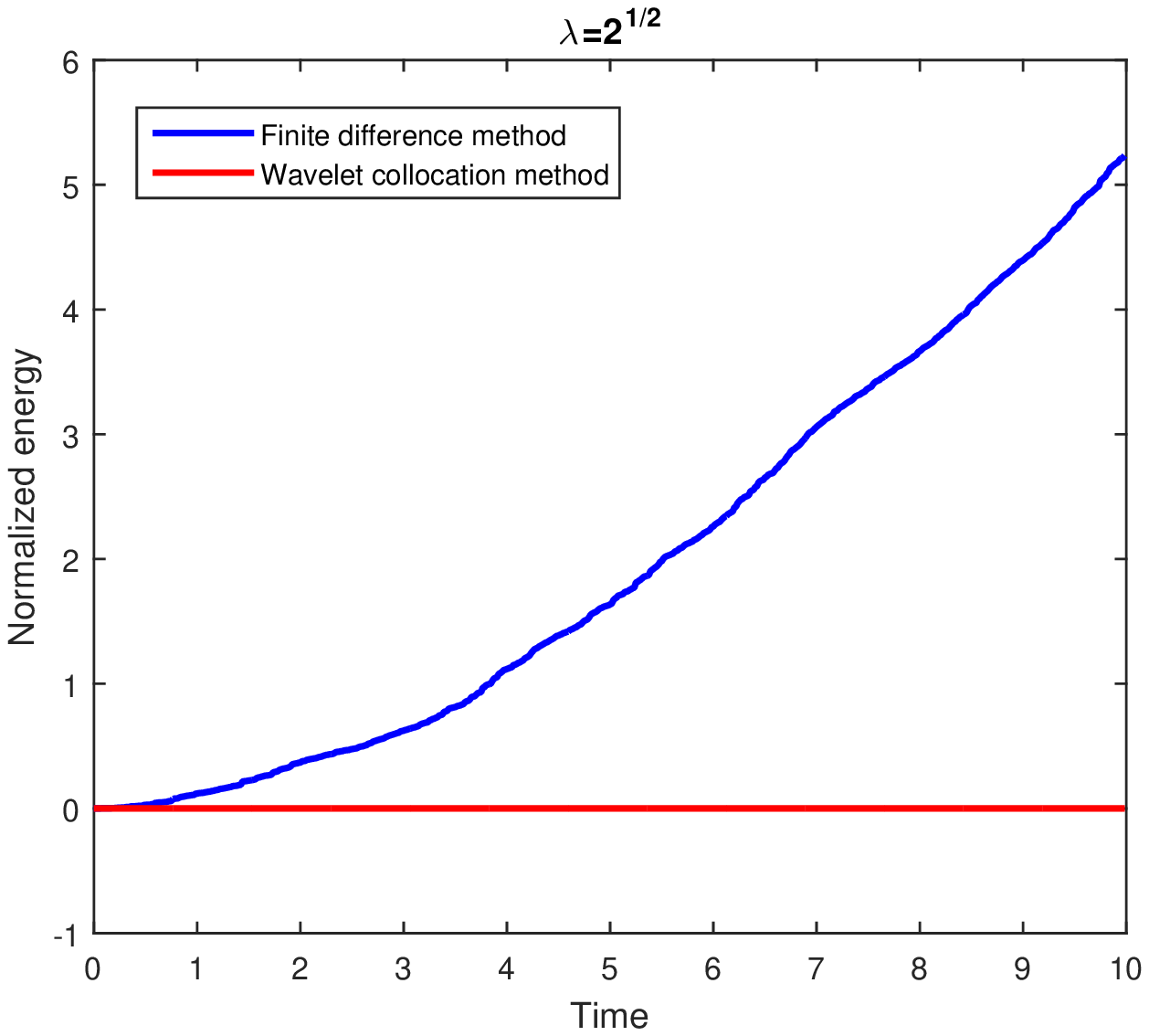}
  \caption{Evolution of the energy averaged over 100 trajectories with $\Delta t=1/64,~T=10$.}\label{energy_com}
  \end{center}
\end{figure}

\begin{figure}[th!]
\begin{center}
  \includegraphics[width=0.8\textwidth]{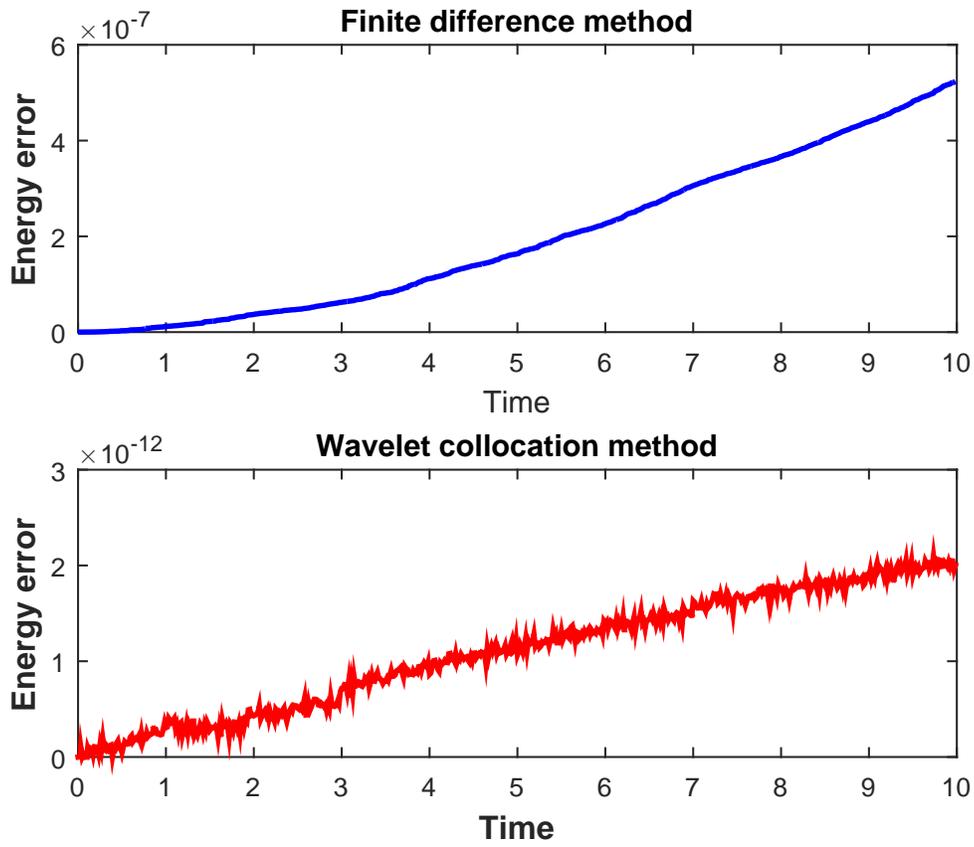}
  \caption{Evolution of the averaged energy error over 100 trajectories with $\Delta t=1/64,~T=10$ . Top: finite difference method; Bottom: wavelet collocation method.}\label{energy_error_com}
  \end{center}
\end{figure}

Finally, we investigate the convergence order in time of the proposed method.
Define
\begin{equation*}
  e_{\Delta t}^{strong}:=\Big(\mathbb{E}\|u(\cdot,T)-u_{T}(\cdot)\|^{2}\Big)^{\frac{1}{2}},
\end{equation*}
with $u=({\bf E},{\bf H})^T$. We plot
 $e_{\Delta t}^{strong}$ against $\Delta t$ on a log-log scale for the truncated number of Wiener process $1\leq \mathcal{M}\leq 8$ until $T=0.1$. We compute the numerical solution with a vary small time step, e.g., $\Delta t=2^{-11}$, as the reference ``exact" solution.
(i) We consider $\lambda=0$. The $e_{\Delta t}^{strong}$ and the convergence orders of the wavelet collocation method in time are listed in Table \ref{tab1}. We can see that the method gives a uniform second order of accuracy for deterministic Maxwell equations.
\begin{table}[htbp]
\begin{center}
\caption{Accuracy test for stochastic Maxwell equations \eqref{stochastic maxwell equations}}
 \label{tab1}
\begin{tabular}{ccc}
\hline    $\Delta t$ ~~& $L^{2}$ error ~~& Order\\[2mm]
\hline    $1/2^6$ ~~& 6.39E-2 ~~& -\\[2mm]
    $1/2^7$ ~~& 1.60E-2 ~~& 1.99\\[2mm]
    $1/2^8$ ~~& 4.00E-3 ~~& 2.02\\[2mm]
    $1/2^9$ ~~& 9.00E-4 ~~& 2.07\\[2mm]
    $1/2^{10}$ ~~& 2.00E-4 ~~& 2.32\\[2mm]
\hline
\end{tabular}
\end{center}
\end{table}
(ii) Let $\lambda=\sqrt{2}$. Table \ref{tab2} presents the mean-square convergence order in time for the proposed method with different sorts of Wiener processes depending on $\mathcal{M}$ along 100 paths.
 The strong order of convergence is approximately 1 for $\mathcal{M}=1,4,8$. It is worth to study the convergence order of the proposed stochastic multi-symplectic wavelet collocation method theoretically.

\begin{table}[htbp]
\begin{center}
\caption{Accuracy test for stochastic Maxwell equations \eqref{stochastic maxwell equations}}
 \label{tab2}
\begin{tabular}{c|cc|cc|cc}
\hline   $\Delta t$ ~~& $L^{2}$ error ~~& Order~~& $L^{2}$ error ~~& Order~~& $L^{2}$ error ~~& Order\\[2mm]
                &  $\mathcal{M}=1$&&$\mathcal{M}=4$&&$\mathcal{M}=8$&\\[2mm]
\hline
    $1/2^6$ ~~&3.03E-1~~&  - ~~&4.63E-1~~&  - ~~&6.51E-1~~&-\\[2mm]
    $1/2^7$ ~~&1.32E-1~~&1.19~~&1.95E-1~~&1.25~~&2.67E-1~~&1.29\\[2mm]
    $1/2^8$ ~~&5.78E-2~~&1.20~~&8.43E-2~~&1.21~~&1.11E-1~~&1.26\\[2mm]
    $1/2^9$ ~~&2.41E-2~~&1.26~~&3.50E-2~~&1.27~~&4.48E-2~~&1.31\\[2mm]
    $1/2^{10}$ ~~&8.20E-3&1.56~~&1.19E-2~~&1.56~~&1.53E-2~~&1.55\\[2mm]
\hline
\end{tabular}
\end{center}
\end{table}

\section{Conclusions}
In this paper, we design an energy-conserving numerical method for 3D stochastic Maxwell equations with multiplicative noise. The method not only solves equations efficiently, but also preserves exactly the discrete stochastic multi-symplectic conservation law and the discrete energy conservation law. Numerical experiments show the good performance of the proposed method. In addition, the mean square convergence order in time is studied numerically. Since the theoretical analysis of the convergence is difficult, we will devote to study it rigorously in the future work. Certainly, it is also very interesting to extend the method to other equations, such as 3D stochastic nonlinear Schr\"{o}dinger equation with multiplicative noise.


\begin{thebibliography}{}
\bibitem{Bertoluzza}
{\sc S. Bertoluzza and G. Naldi}, {\em A wavelet collocation method for the numerical solution of partial differential equations}, Appl. Comput. Harmon. Anal., \textbf{3} (1996), pp. 1-9.

\bibitem{Chauvi}
{\sc C. Chauvi\`{e}re, J.S. Hesthaven and L. Lurati}, {\em Computational modeling of uncertainty in time-domain electromagnetics}, SIAM J. Sci. Comput., \textbf{28} (2006), pp. 751-775.

\bibitem{CZ}
{\sc C. Chen, J. Hong and L. Zhang}, {\em Preservation of physical properties of stochastic Maxwell equations with additive noise via stochastic multi-symplectic methods}, J. Comput. Phys., \textbf{306} (2016), pp. 500-519.

\bibitem{Chena}
{\sc W. Chen, X. Li and D. Liang}, {\em Energy-conserved splitting FDTD methods
for Maxwell equations}, Numer. Math., \textbf{108} (2008), pp. 445-485.

\bibitem{Chenb}
{\sc W. Chen, X. Li and D. Liang}, {\em Energy-conserved splitting FDTD methods for Maxwell equations in three dimensions}, SIAM J. Numer. Anal., \textbf{48} (2010), pp. 1530-1554.

\bibitem{Cohen}
{\sc D. Cohen and G. Dujardin},{\em Energy-preserving integrators for stochastic Poisson systems}, Commun. Math. Sci., \textbf{12} (2014), pp. 1523-1539.


\bibitem{Haier}
{\sc M. Francoeur and M. Meng\"{u}\c{c}}, {\em Role of fluctuational electrodynamics in near-field radiative heat transfer}, J. Quantitative Spectroscopy and Radiative Transfer, \textbf{109} (2008), pp. 280-293.

\bibitem{Faou}
{\sc E. Faou and T. Leli\`{e}vre}, {\em Conservative stochastic differential equations: mathematical and numerical
analysis}, Math. Comp., \textbf{78} (2009), pp. 2047-2074.

\bibitem{Hong11}
{\sc J. Hong, S. Zhai and J. Zhang}, {\em Discrete gradient approach to stochastic differential equations with a
conserved quantity}, SIAM J. Numer. Anal., \textbf{49} (2011), pp. 2017-2038.

\bibitem{Refhai}
{\sc J. Hong, L. Ji and L. Zhang}, {\em A stochastic multi-symplectic scheme for stochastic Maxwell equations with additive noise}, J. Comput. Phys., \textbf{268} (2014), pp. 255-268.

\bibitem{RefY}
{\sc S. Jiang, L. Wang and J. Hong}, {\em Stochastic multi-symplectic integrator for stochastic Hamiltonian nonlinear Schr\"{o}dinger equation},
Commun. Comput. Phys., \textbf{14} (2013), pp. 393-411.

\bibitem{RefKurt}
{\sc L. Kurt and T. Sch\"{a}fer}, {\em Propagation of ultra-short solitons in stochastic Maxwell equations}, J. Math. Phys., \textbf{55} (2014), 011503.

\bibitem{Kong}
{\sc L. Kong, J. Hong and J. Zhang}, {\em Splitting multi-symplectic integrators for Maxwell equations}, J. Comput. Phys., \textbf{229} (2010), pp. 4259-4278.

\bibitem{RefLS}
{\sc K.B. Liaskos, I.G. Stratis and A.N. Yannacopoulos}, {\em Stochastic integrodifferential equations in Hilbert spaces with applications in electromagnetics}, J. Integral Equations Appl., \textbf{22} (2010), pp. 559-590.

\bibitem{Misawa}
{\sc T. Misawa}, {\em Energy conservative stochastic difference scheme for stochastic Hamilton dynamical
systems}, Japan J. Indust. Appl. Math., \textbf{17} (2000), pp. 119-128.

\bibitem{Prato}
{\sc G. Da Prato and J. Zabczyk}, {\em Stochastic Equations in Infinite Dimensions,
in: Encyclopedia of Mathematics and its Applications}, 44, Cambridge University
Press, 1995.

\bibitem{Rytov}
{\sc S.M. Rytov, Y.A. Kravtsov and V.I. Tatarskii}, {\em Principles of statistical radiophysics 3: Elements of random fields}, Springer, 1987.


\bibitem{Zhu11}
{\sc H. Zhu, S. Song and Y. Chen}, {\em Multi-symplectic wavelet collocation method for Maxwell equations}, Adv. Appl. Math. Mech., \textbf{3} (2011), pp. 663-688.


\end{thebibliography}
\end{document}